\documentclass{article}
\usepackage[latin1]{inputenc}
\usepackage{amssymb, amsmath, amsthm}
\usepackage[a4paper]{geometry}
\usepackage{footnote}
\makesavenoteenv{tabular}
\usepackage{tabularx}
\usepackage{rotating}
\usepackage{color}

\theoremstyle{plain}

\newtheorem{definition}{Definition}[section]
\newtheorem{example}{Example}[section]
\newtheorem{lemma}{Lemma}[section]

\newtheorem{theorem}{Theorem}[section]

\begin{document}
\begin{center}
\large{\textbf{The (logarithmic)  least squares optimality of
the arithmetic (geometric)  mean of weight vectors
calculated from all spanning trees for incomplete  additive (multiplicative)  pairwise comparison matrices}} \\[1cm]
{S\'andor Boz\'oki$^{1,2,3}$,}  {Vitaliy Tsyganok$^{4,5}$}   \\[1cm]
\end{center}
\footnotetext[1]{corresponding author}
\footnotetext[2]{Laboratory on Engineering and Management
Intelligence, Research Group of Operations Research and Decision
Systems, Institute for Computer Science and Control, Hungarian
Academy of Sciences; Mail: 1518 Budapest, P.O.~Box 63, Hungary. E-mail: bozoki.sandor@sztaki.mta.hu}
\footnotetext[3]{Department of Operations Research and Actuarial Sciences, Corvinus
University of Budapest, Hungary}
\footnotetext[4]{Laboratory for Decision Support Systems,
Institute for Information Recording of National Academy of Sciences of Ukraine; Mail:
2, Shpak str., Kyiv, 03113, Ukraine. E-mail: tsyganok@ipri.kiev.ua}
\footnotetext[5]{Department of Information Systems and Technologies,
Institute of Special Communication and Information Protection of
National Technical University of Ukraine
{\tiny{\raisebox{0.5mm}{$\ll$}}\footnotesize{Igor Sikorsky Kyiv Polytechnic Institute}\tiny{\raisebox{0.5mm}{$\gg$}}\footnotesize{.}
}}

\normalsize

\date{}

\begin{abstract}
Complete and incomplete  additive/multiplicative
pairwise comparison matrices are applied
in preference modelling, multi-attribute decision making and
ranking. The equivalence of two well known methods is proved in
this paper.
 The arithmetic (geometric)  mean of weight vectors,
calculated from all spanning trees, is proved to be optimal to the
 (logarithmic)
  least squares problem, not only for
complete, as it was recently shown in Lundy, M., Siraj, S.,
Greco, S. (2017): The mathematical equivalence of the ``spanning
tree'' and row geometric mean preference vectors and its
implications for preference analysis, European Journal of
Operational Research 257(1) 197--208, but for incomplete matrices
as well. Unlike the complete case, where an explicit formula, namely
 the row  arithmetic/geometric   mean of matrix elements,
 exists for the (logarithmic)
 least squares problem, the incomplete case requires a completely
dif{\kern0pt}ferent and new proof.
Finally, Kirchhof{\kern0pt}f's laws for the calculation of potentials in electric circuits is
connected to our results.

\textbf{Keywords:} decision analysis, multi-criteria decision making, incomplete pairwise comparison matrix,
additive, multiplicative, least squares, logarithmic least squares, Laplacian matrix, spanning tree

\end{abstract}

\section{Introduction}

Preference modelling is a family of qualitative and quantitative approaches in order to support decisions,
especially the choice of an alternative among a set of possible actions, or ranking them.
Many real decision problems involve multiple and often competing criteria
\cite{GrecoEhrgottFigueira2016}, therefore the weights of their
importance are also taken into account. Pairwise comparisons are applicable in both single and multiple
criteria decision making, as they divide complex problem into smaller tasks.

\subsection{Incomplete  multiplicative  pairwise comparison matrices}

Cardinal preferences of decision makers are often modelled and
calculated by pairwise comparison matrices \cite{Saaty1977}.
Questions 'How many times is a criterion more important than
another one?' or 'How many times is a given alternative better
than another one with respect to a f{\kern0pt}ixed criterion?' are
typical in multi-attribute decision problems. The numerical
answers are collected into a
 multiplicative  pairwise comparison matrix
$\mathbf{A}=[a_{ij}]_{i,j=1 \ldots n}$ fulf{\kern0pt}illing
reciprocity, i.e., $a_{ij} = 1/a_{ji}.$ A pairwise comparison
matrix can be complete, as in the Analytic Hierarchy Process (AHP)
\cite{Saaty1977}, or incomplete
\cite{BozokiFulopRonyai2010,CarmoneKaraZanakis1997,FedrizziGiove2007,Harker1987b,Kwiesielewicz1996,MengChen2015,LundySirajGreco2017,OlivaScalaSetolaDellOlmo2019,ShiraishiObataDaigo1998,SirajMikhailovKeane2012a,SirajMikhailovKeane2012b,TakedaYu1995,UrenaChiclanaMorente-MolineraHerrera-Viedma2015}.
A complete  multiplicative  pairwise comparison matrix $\mathbf{A}=[a_{ij}]$ is
called consistent if cardinal transitivity, i.e., $a_{ij} a_{jk} =
a_{ik}$ holds for all $i,j,k$. Otherwise, the matrix is
inconsistent, and several
 inconsistency indices have been proposed, see
 \cite{Brunelli2016,Brunelli2018,MengChen2015,Saaty1977}.

In this study
incomplete means 'not necessarily complete', in other words, the number of missing elements is allowed to be zero.

\begin{example} \label{6x6example} 
Let $\mathbf{A}$ be a $6 \times 6$ incomplete  multiplicative  pairwise comparison matrix as follows:
\[
\mathbf{A} =
\begin{pmatrix}
     1    &   a_{12}     &            &     a_{14}   &    a_{15}    &     a_{16}        \\
  a_{21}  &     1        &    a_{23}  &              &              &                   \\
          &   a_{32}     &      1     &     a_{34}   &              &                   \\
  a_{41}  &              &    a_{43}  &      1       &    a_{45}    &                   \\
  a_{51}  &              &            &     a_{54}   &       1      &                   \\
  a_{61}  &              &            &              &              &         1
\end{pmatrix},
\]
 where $a_{ij} = 1/a_{ji}$ for all the known elements.
\end{example}

Incomplete pairwise comparison matrices can be applied not only in
the same multiple criteria decision situations in which the complete matrices arise
 (hundreds of case studies are listed in, e.g.,
\cite{Ho2008,SubramanianRamanathan2012,VaidyaKumar2006}),
 but also to larger decision and ranking problems.
Boz\'oki, Csat\'o and Temesi \cite{BozokiCsatoTemesi2016} proposed
a ranking method for top tennis players based on their pairwise
results, where incompleteness occurs in a natural way. Csat\'o
\cite{Csato2013} constructed a $149 \times 149$ incomplete
pairwise comparison matrix to rank the teams of the 39th Chess
Olympiad 2010.  Chao, Kou, Li and Peng \cite{ChaoKouLiPeng2018}
ranked 1544 Go players based on their matches played against each
other, which naturally formed an incomplete pairwise comparison
matrix. Duleba, Mishina and Shimazaki
\cite{DulebaMishinaShimazaki2012} applied small but incomplete
matrices in developing a decision model for urban bus
transportation supply. Ben\'{\i}tez, Delgado-Galv\'{a}n, Izquierdo
and P\'{e}rez-Garc\'{\i}a
\cite{BenitezDelgado-GalvanIzquierdoPerez-Garcia2015} calculated
the priorities from incomplete matrices in f{\kern0pt}inding the
best leakage control policy to minimize water loss. Krej\v{c}i
\cite[Chapter 5]{Krejci2018} presents an incomplete pairwise
comparison matrix based model for the evaluation of artistic
performance.

\subsection{The logarithmic least squares (LLS) problem  for multiplicative matrices }

The basic problem of f{\kern0pt}inding the \emph{best} weight vector
usually includes an additional information on how \emph{closeness} is
def{\kern0pt}ined or specif{\kern0pt}ied. The classical approaches
apply metrics based on least squares \cite{ChuKalabaSpingarn1979},
weighted least squares \cite{ChuKalabaSpingarn1979}, logarithmic
least squares
\cite{CrawfordWilliams1985,deGraan1980,deJong1984,Rabinowitz1976},
 just to name a few.
 Further weighting methods are discussed by Golany and Kress \cite{GolanyKress1993}
and by Choo and Wedley \cite{ChooWedley2004}.
Even the well-known eigenvector method \cite{Saaty1977} is proved to be a
distance minimizing method \cite{Fichtner1984,Fichtner1986}, although its
metric seems to be rather artif{\kern0pt}icial.\\

\begin{definition}
The Logarithmic Least Squares (LLS) problem \cite{Kwiesielewicz1996,TakedaYu1995} is def{\kern0pt}ined as follows:
\begin{align}
&\min \sum \limits_{\scriptsize{
             \begin{array}{c}
              i,j:  \\
              a_{ij} \text{ is known} \\
             \end{array}}}
\left[\log a_{ij}
-\log\left(\frac{w_{i}}{w_{j}}\right)\right]^2 \nonumber \\
 &                                                  \label{eq:IncompleteLLSMProblem-ObjFunction}  \\
&\text{subject to } \qquad w_{i} > 0, \qquad i=1,2,\dotsc,n. \nonumber
\end{align}
\end{definition}

Originally, the LLS problem was def{\kern0pt}ined for complete  multiplicative
pairwise comparison matrices, i.e., the sum in the objective
function is taken for all $i,j$ \cite{CrawfordWilliams1985,deGraan1980,deJong1984,Rabinowitz1976}.
In this special case, the LLS
optimal solution is unique and it can be explicitly computed by
taking  the row-wise geometric mean
\cite{CrawfordWilliams1985,deJong1984,Rabinowitz1976}.
Furthermore, in case of $3 \times 3$ complete pairwise comparison matrices, the eigenvector method
and the LLS method are equivalent, they result in the same weight vector
\cite{CrawfordWilliams1985}.
Several characterizations of the complete LLS weighting method  (or equivalently, the row geometric mean)
can be found in \cite{BarzilaiCookGolany1987,Csato2019,Fichtner1984,Fichtner1986}.  \\

The most common  scalings  are
$\sum\limits_{i=1}^{n}w_{i} = 1$ and $\prod\limits_{i=1}^{n}w_{i}
= 1.$  Scaling  $w_{1} = 1$ (called
\emph{ideal-mode} in Lundy, Siraj and Greco
\cite{LundySirajGreco2017}), can also be interpreted in the
following way: the f{\kern0pt}irst object (criterion, alternative)
is considered a reference point and all the others are expressed
according to it, similar to SMART \cite{Edwards1977},
if the f{\kern0pt}irst criterion is the least important one.

Given an (in)complete pairwise comparison matrix $\mathbf{A}$ of
size $n \times n$, an undirected graph $G(V,E)$ is def{\kern0pt}ined as follows: $G$ has $n$ nodes and
the edge between nodes $i$ and $j$ is drawn if and only if the matrix element
$a_{ij}$ is known. The graph of the incomplete pairwise comparison matrix in Example
\ref{6x6example} is given in Figure 1.

The graph-theoretic consideration makes it possible to represent the direct
comparison $a_{ij}$ between elements $i$ and $j$, as well as the indirect ones,
 e.g., via paths of two ($a_{ik},a_{kj}$), three ($a_{ik},a_{k{\ell}},a_{{\ell}j}$) or more edges
 \cite{Barzilai1997,BarzilaiCookGolany1987,Brugha2000,Gass1998,Harker1987b,HarkerVargas1987}.
 See also \cite[Subsection 2.2]{FedrizziGiove2007} as well as all references on spanning trees in subsection 1.4
 of this paper.

The following theorem provides a method for solving the
  LLS problem  (\ref{eq:IncompleteLLSMProblem-ObjFunction}).
\begin{theorem} \label{BozokiFulopRonyai2010theorem} 
(Boz\'oki, F\"ul\"op, R\'onyai \cite[Section 4]{BozokiFulopRonyai2010})
Let $\mathbf{A}$ be an incomplete or complete  multiplicative  pairwise comparison matrix such that its
associated graph $G$ is connected. Then the optimal solution $\mathbf{w} = \exp \mathbf{y}$
of the logarithmic least squares problem
 (\ref{eq:IncompleteLLSMProblem-ObjFunction})
 is the unique solution of the following system of linear equations:
\begin{align}
\left( \mathbf{L}  \mathbf{y}          \right)_i  &=    \sum\limits_{k: e(i,k) \in E(G)} \log a_{ik}
\qquad \qquad \text{ for all } i=1,2,\ldots,n-1,n,  \label{equationLaplacian} \\
                                    y_1  &= 0.        \label{normalizationy1=0}      
\end{align}
where $\mathbf{L}$ denotes the Laplacian matrix of $G$ ($\ell_{ii}$ is the degree of node $i$ and $\ell_{ij}=-1$
if nodes $i$ and $j$ are adjacent).
\end{theorem}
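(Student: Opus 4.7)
My plan is to transform the LLS problem into an unconstrained convex quadratic minimization via the substitution $y_i = \log w_i$, then read off the first-order optimality conditions, and finally invoke standard facts about graph Laplacians to get uniqueness.

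First, I would set $b_{ij} := \log a_{ij}$. The reciprocity $a_{ij} = 1/a_{ji}$ gives $b_{ij} = -b_{ji}$, and the positivity constraint $w_i > 0$ becomes automatic since $w_i = \exp(y_i)$. The objective becomes
\[
F(\mathbf{y}) = \sum_{(i,j): a_{ij} \text{ known}} (b_{ij} - y_i + y_j)^2,
\]
which is a convex quadratic in $\mathbf{y}$, so every stationary point is a global minimum. Taking $\partial F/\partial y_k = 0$ and carefully tracking that each edge of $G$ contributes two terms in the sum (once as $(i,j)=(k,\cdot)$ and once as $(k,\cdot) = (j,i)$), together with $b_{kj} = -b_{jk}$, collapses the equation into
\[
\sum_{j:\, e(k,j) \in E(G)}(y_k - y_j) \;=\; \sum_{j:\, e(k,j) \in E(G)} \log a_{kj}.
\]
The left-hand side is exactly the $k$-th entry of $\mathbf{L}\mathbf{y}$ from the definition of the Laplacian given in the theorem, so the stationarity condition is precisely equation \eqref{equationLaplacian}.

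For uniqueness, I would use the classical fact that for a connected graph on $n$ vertices, $\mathbf{L}$ has rank $n-1$ with kernel spanned by the all-ones vector $\mathbf{1}$. Consistency of the system $\mathbf{L}\mathbf{y} = \mathbf{c}$ follows from checking that the right-hand side is orthogonal to $\mathbf{1}$: summing over all $k$ gives $\sum_{(i,j) \in E(G)} (\log a_{ij} + \log a_{ji}) = 0$ by reciprocity. Hence the solution set is a one-parameter affine family $\{\mathbf{y}^* + t\mathbf{1} : t \in \mathbb{R}\}$, and any such shift leaves the ratios $w_i/w_j$ unchanged, so $F$ is invariant under it. The additional normalization $y_1 = 0$ pins down a unique representative, giving the claimed unique solution.

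The main obstacle, I expect, is not any single step but the bookkeeping in the derivative computation, in particular making sure the factor-of-two coming from counting each edge twice in the original sum is handled correctly, and then matching the resulting expression to the sign convention of the Laplacian ($\ell_{ii} = \deg(i)$, $\ell_{ij} = -1$ for adjacent $i,j$). The only place where the connectedness hypothesis is genuinely needed is in the uniqueness argument: if $G$ had several connected components, each component could be rescaled independently, producing a kernel of dimension larger than one and preventing $y_1 = 0$ alone from fixing the solution on the other components.
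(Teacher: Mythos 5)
Your proposal is correct: the substitution $y_i=\log w_i$, the first-order conditions of the resulting convex quadratic, and the rank-$(n-1)$/kernel-$\mathbf{1}$ argument for uniqueness under the normalization $y_1=0$ constitute the standard proof of this result. The paper itself does not prove this theorem but imports it from Boz\'oki, F\"ul\"op and R\'onyai \cite{BozokiFulopRonyai2010}, whose argument is essentially the one you give, so there is nothing to add beyond noting that your derivation is complete as stated.
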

$\mathbf{L}$ has rank $n-1$.  Scaling
 (\ref{normalizationy1=0}), being equivalent to $w_1 = 1,$ plays a technical role only.
It can be replaced by, e.g., the commonly used $\prod_{i=1}^n w_i = 1 \, \, (\Leftrightarrow \sum_{i=1}^n y_i = 0).$

\begin{example} \label{6x6exampleLaplacian} 
Let incomplete  multiplicative  pairwise comparison matrix $\mathbf{A}$ be the same as in Example \ref{6x6example}.
 Equations (\ref{equationLaplacian})
  for $i=1,2,\ldots,6$ form the following system of linear equations:
\[
\begin{pmatrix}
     4    &     -1       &      0     &     -1       &      -1      &        -1         \\
    -1    &      2       &     -1     &      0       &       0      &         0         \\
     0    &     -1       &      2     &     -1       &       0      &         0         \\
    -1    &      0       &     -1     &      3       &      -1      &         0         \\
    -1    &      0       &      0     &     -1       &       2      &         0         \\
    -1    &      0       &      0     &      0       &       0      &         1
\end{pmatrix}
\begin{pmatrix}
    y_1 (=0)    \\
    y_2    \\
    y_3    \\
    y_4    \\
    y_5    \\
    y_6
\end{pmatrix} =
\begin{pmatrix}
   \log a_{12}  + \log a_{14}  +\log a_{15} + \log  a_{16}    \\
   \log a_{21}  + \log a_{23}    \\
   \log a_{32}  + \log a_{34}    \\
   \log a_{41}  + \log a_{43}  +\log a_{45}   \\
   \log a_{51}  + \log a_{54}    \\
   \log a_{61}
\end{pmatrix},
\]
where the matrix of coef{\kern0pt}f{\kern0pt}icients above is the Laplacian matrix of the connected graph $G$ in Figure 1, that corresponds to
incomplete pairwise comparison matrix $\mathbf{A}$.
\unitlength 1mm
\begin{center}
\begin{picture}(100,60)
\put(25,12){\resizebox{50mm}{!}{\rotatebox{0}{
\includegraphics{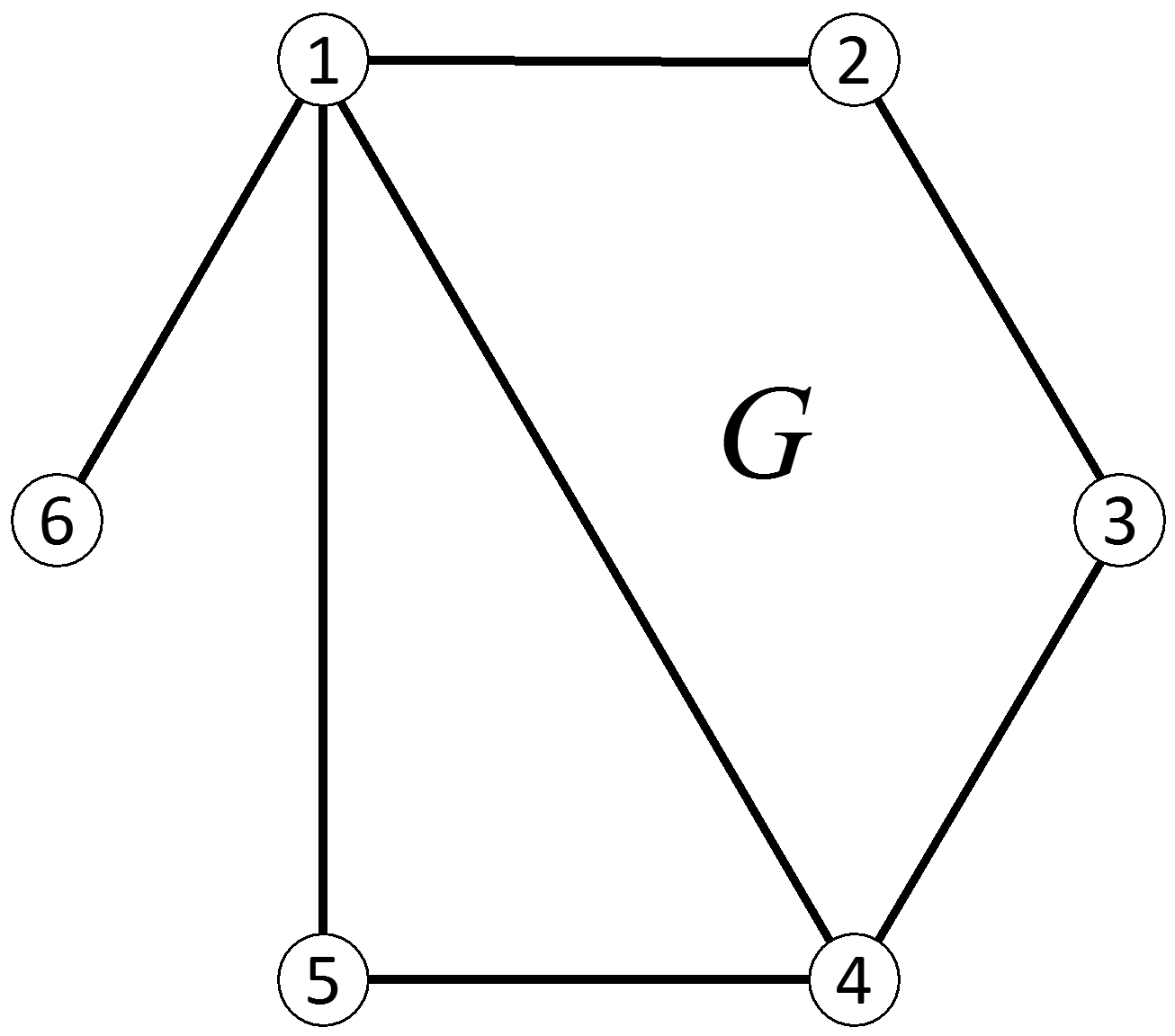}}}}
\put(22,0){\makebox{\emph{Figure 1. Graph $G$ of Example
\ref{6x6example}} }}
\end{picture}
\end{center}
\end{example}

\subsection{The least squares (LS) problem  for additive matrices}
Pairwise comparison matrices are relevant not only in multiplicative sense. An additive pairwise comparison matrix
 \cite{Barzilai1997,BarzilaiGolany1990}
$\mathbf{B}=[b_{ij}]_{i,j=1 \ldots n}$ fulf{\kern0pt}ils skew-symmetry, i.e., $b_{ij} = -b_{ji}.$
For every multiplicative pairwise comparison matrix $\mathbf{A}$,
$\mathbf{B}=\log(\mathbf{A})$ (elementwise) is an additive pairwise comparison matrix and vice versa
($\mathbf{A}=\exp(\mathbf{B})$).
The additive pairwise comparison matrix $\mathbf{B}$ is called
consistent if $b_{ij} + b_{jk} = b_{ik}$ holds for all $i,j,k$.
See \cite[Subsection 4.1.1]{Brunelli2015} for the applications of additive matrices in
  multi-criteria decision models like SMART \cite{Edwards1977} or
  REMBRANDT \cite[Chapter 12]{Lootsma1999},\cite{OlsonFliednerCurrie1995}.
Additive pairwise comparison matrices can also be incomplete, similar to the multiplicative ones.

\begin{example} \label{6x6exampleAdditive} 
Recall Example \ref{6x6example}.
The incomplete additive  pairwise comparison matrix
$\mathbf{B}=\log(\mathbf{A})$ (elementwise, except for the missing ones) is as follows:
\[
\mathbf{B} =
\begin{pmatrix}
     0    &   b_{12}     &            &     b_{14}   &    b_{15}    &     b_{16}        \\
 -b_{12}  &     0        &    b_{23}  &              &              &                   \\
          &  -b_{23}     &      0     &     b_{34}   &              &                   \\
 -b_{14}  &              &   -b_{34}  &      0       &    b_{45}    &                   \\
 -b_{15}  &              &            &    -b_{45}   &       0      &                   \\
 -b_{16}  &              &            &              &              &         0
\end{pmatrix},
\]
\end{example}

The least squares (LS) minimization problem def{\kern0pt}ined for additive pairwise comparison matrices can
  be written as
\begin{align}
&\min \sum \limits_{\scriptsize{
             \begin{array}{c}
              i,j:  \\
              b_{ij} \text{ is known} \\
             \end{array}}}
\left(b_{ij} - y_{i} + y_{j} \right)^2  \nonumber \\
  &                                                   \label{eq:IncompleteLSMProblem-ObjFunction}  \\
&\text{subject to } \qquad y_{1} = 0. \nonumber
\end{align}

The least squares minimization problem for additive matrices
 (4)  
is widely applied in multi-criteria decision making  and preference modelling,
see \cite{Barzilai1997,Barzilai1998,BarzilaiGolany1990,Lootsma1999}.

The LS problem (4)  
 can be traced back to Thurstone \cite{Thurstone1927}  and Horst \cite{Horst1932}.

LS is
among the scoring models discussed by Chebotarev and Shamis \cite[Section 8.1]{ChebotarevShamis1999}, or
in the context of preference graphs by \v{C}aklovi\'{c} and Kurdija \cite[Section 2]{CaklovicKurdija2017}.

Note that Theorem \ref{BozokiFulopRonyai2010theorem} applies to the
LS problem (4)  
 too, with $\mathbf{A}=\exp(\mathbf{B})$.

 Rewording the def{\kern0pt}inition of consistency,
 $a_{ij} a_{jk} = a_{ik} \Leftrightarrow  a_{ij} a_{jk} a_{ki} = 1 $ (multiplicative)
 and $b_{ij} + b_{jk} = b_{ik}  \Leftrightarrow b_{ij} + b_{jk} + b_{ki} = 0 $ (additive),
 require that the product/sum of matrix elements in any 3-cycle must be 1/0.
 This leads to the more general def{\kern0pt}inition of consistency that can be applied to both complete
 and incomplete pairwise comparison matrices.
\begin{definition}
A multiplicative/additive (in)complete pairwise comparison matrix $\mathbf{A}/\mathbf{B}$ is called
consistent, if
$a_{i_1 i_2} \cdot a_{i_2 i_3} \cdot \ldots \cdot a_{i_k i_1} = 1 $ /
$b_{i_1 i_2} +  b_{i_2 i_3}  +  b_{i_k i_1} = 0 $
for any cycle $i_1,i_2,\ldots i_k,i_1$ in the graph of the matrix.
\end{definition}
Note that this def{\kern0pt}inition is equivalent to that the incomplete matrix can be (fully) completed
such that the complete matrix is consistent. Furthermore this completion is unique if and only if
the graph is connected. It follows from the def{\kern0pt}inition that an incomplete matrix with an acyclic graph
 (a tree or a disjoint union of trees) is consistent.
Consistency is also equivalent to that the optimum value of the
 logarithmic least squares (\ref{eq:IncompleteLLSMProblem-ObjFunction}) /
least squares (4)  
 problem is 0. Again, the optimal solution is unique up to scaling if and only if the graph is connected. \\

 The close relation of Def{\kern0pt}inition 1.2 to
 Kirchhof{\kern0pt}f's Voltage Law (the signed sum of the potential dif{\kern0pt}ferences around any closed loop is zero)
 is recalled in Section 3.

\subsection{Aggregations of weight vectors calculated from all spanning trees}

The spanning tree approach by Tsyganok \cite{Tsyganok2000,Tsyganok2010}
does not assume any distance function or measure of closeness. The basic idea
is that the set of pairwise comparisons is considered as the union of minimal, connected
subsets, or, in graph-theoretic terms, spanning trees.
Let $S$ denote the number of all spanning trees of graph $G$.
Every spanning tree determines a unique
weight vector f{\kern0pt}itting on the corresponding subset of matrix elements perfectly,
   as the incomplete pairwise comparison matrix associated to a spanning tree
  is consistent according to Def{\kern0pt}inition 1.2.
Given a spanning tree, the calculation of its associated weight vector requires $O(n)$ steps.

The number of spanning trees can be very large.
In the special case of complete pairwise comparison matrices, the number of all spanning trees is
$S= n^{n-2}$ by Cayley's theorem.
Another extremal case is when the graph of the incomplete pairwise comparison
matrix is itself a tree ($S=1$).

The most natural candidates for the aggregation of weight vectors calculated from all spanning trees
are the arithmetic
 \cite{SirajMikhailovKeane2012a,SirajMikhailovKeane2012b,Tsyganok2000,Tsyganok2010}
and the geometric means \cite{LundySirajGreco2017,TsyganokKadenkoAndriichuk2015}.

The following theorem connects two weighting methods.
\begin{theorem}  \label{LundySirajGreco2017theorem} 
(Lundy, Siraj and Greco \cite{LundySirajGreco2017})
The geometric mean of weight vectors calculated from all spanning trees
is logarithmic least squares optimal in case of complete  multiplicative  pairwise comparison matrices.
\end{theorem}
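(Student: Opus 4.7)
The plan is to take logarithms so that the geometric mean over spanning trees becomes an arithmetic mean, and then to show that this arithmetic mean satisfies the Laplacian system that characterizes the LLS optimum in Theorem~\ref{BozokiFulopRonyai2010theorem}. Setting $\mathbf{y}^{(T)} = \log \mathbf{w}^{(T)}$ with the normalization $y^{(T)}_1 = 0$ and $\bar{\mathbf{y}} = \frac{1}{S}\sum_T \mathbf{y}^{(T)}$, the goal reduces to verifying $\mathbf{L}\bar{\mathbf{y}} = \mathbf{b}$ with $b_i = \sum_{k\neq i}\log a_{ik}$; by the uniqueness assertion of Theorem~\ref{BozokiFulopRonyai2010theorem} this is equivalent to $\bar{\mathbf{y}}$ being the log of the LLS optimum.

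Since $\mathbf{w}^{(T)}$ fits the entries of $\mathbf{A}$ indexed by the edges of $T$ exactly, each component admits the path formula $y^{(T)}_i = \sum_{(k,l) \in P_T(1,i)} \log a_{kl}$, where $P_T(1,i)$ is the unique $1$-to-$i$ path in $T$, with each edge oriented from its $1$-side endpoint to its $i$-side endpoint. Interchanging the order of the sums over $T$ and over the edges of the path yields, for any two nodes $p,q$,
$$\bar{y}_p - \bar{y}_q \;=\; \sum_{\{k,l\}} \frac{N^{q\to p}_{k\to l} - N^{q\to p}_{l\to k}}{S}\, \log a_{kl},$$
where $N^{q\to p}_{k\to l}$ is the number of spanning trees of $K_n$ in which $\{k,l\}$ lies on the $q$-to-$p$ path with $k$ closer to $q$.

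The crux, and where I expect the main work to lie, is the combinatorial identity
$$\frac{N^{q\to p}_{k\to l} - N^{q\to p}_{l\to k}}{S} \;=\; i_{kl}(q\to p),$$
where $i_{kl}(q\to p)$ is the current flowing along edge $(k,l)$ when a unit current is injected at $q$ and extracted at $p$ in the resistor network obtained by placing unit conductance on every edge of $K_n$. This is a classical consequence of the all-minors version of Kirchhoff's Matrix-Tree theorem. Granting it, Ohm's law gives $i_{kl}(q\to p) = (\mathbf{L}^+(\mathbf{e}_p - \mathbf{e}_q))_k - (\mathbf{L}^+(\mathbf{e}_p - \mathbf{e}_q))_l$, and for the complete graph the Laplacian pseudoinverse is the explicit matrix $\mathbf{L}^+ = \frac{1}{n}\mathbf{I} - \frac{1}{n^2}\mathbf{J}$, so the coefficient collapses to $\frac{1}{n}(\delta_{kp} - \delta_{kq} - \delta_{lp} + \delta_{lq})$.

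Substituting and collecting the four Kronecker terms telescopes the difference into
$$\bar{y}_p - \bar{y}_q \;=\; \frac{1}{n}\sum_{l\neq p} \log a_{pl} \,-\, \frac{1}{n}\sum_{l\neq q} \log a_{ql}.$$
Since the classical LLS optimum for a complete pairwise comparison matrix is the row-geometric-mean vector with $y^*_i = \frac{1}{n}\sum_{l\neq i}\log a_{il}$ \cite{CrawfordWilliams1985,deJong1984}, $\bar{\mathbf{y}}$ and $\mathbf{y}^*$ differ only by an additive constant; after the common normalization they coincide, proving the theorem.
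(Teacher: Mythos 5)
Your proof is correct in substance but follows a genuinely different route from the paper. The paper does not prove this theorem in isolation: it obtains it as the special case $G=K_n$ of Theorem~\ref{maintheorem}, whose proof extends each tree vector $\mathbf{w}^s$ to a consistent completion $\mathbf{A}^s$ of all known entries so that $\mathbf{y}^s$ satisfies the Laplacian system (\ref{equation:Ly^s}), and then shows (Lemma~\ref{lemma:average}) that the right-hand sides average to the correct one by pairing each non-tree edge $e(i,k)$ of $T^s$ with the edge-swapped tree $(T^s\setminus e(i,k_1))\cup e(i,k)$, so that the two detour sums telescope to $b_{ik}+b_{ik_1}$. You instead collect the coefficient of each $\log a_{kl}$ in $\bar{\mathbf{y}}$, identify it via Kirchhoff's spanning-tree formula with the unit current through $\{k,l\}$ in the resistor network, and evaluate it explicitly using $\mathbf{L}^+=\frac{1}{n}\mathbf{I}-\frac{1}{n^2}\mathbf{J}$ for $K_n$; I checked the resulting Kronecker bookkeeping and it does reproduce $\frac{1}{n}\sum_{l\neq p}\log a_{pl}-\frac{1}{n}\sum_{l\neq q}\log a_{ql}$ (the pair $\{p,q\}$ contributes $\frac{2}{n}\log a_{pq}$, exactly as needed). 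Two caveats. First, the entire combinatorial content of your argument sits in the identity $(N^{q\to p}_{k\to l}-N^{q\to p}_{l\to k})/S=i_{kl}(q\to p)$, which you grant without proof; it is indeed classical (Kirchhoff; see, e.g., Doyle and Snell), but it is doing precisely the work of the paper's Lemma~\ref{lemma:average}, so a self-contained write-up must prove or precisely cite it --- and you should fix the orientation convention, since with the paper's $a_{ij}=w_i/w_j$ the edge $(v_{j-1},v_j)$ on the path from $1$ to $i$ contributes $\log a_{v_j v_{j-1}}$, not $\log a_{v_{j-1}v_j}$. Second, the trade-off: your route yields the explicit row-geometric-mean formula directly, while the paper's pairing argument never uses the structure of $K_n$ and hence covers incomplete matrices. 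In fact, if you stop before specializing $\mathbf{L}^+$, your own computation gives $\bar{\mathbf{y}}=\mathbf{L}^+\mathbf{b}+c\mathbf{1}$, hence $\mathbf{L}\bar{\mathbf{y}}=\mathbf{L}\mathbf{L}^+\mathbf{b}=\mathbf{b}$ (using $\mathbf{1}^{\mathrm{T}}\mathbf{b}=0$ by reciprocity) for any connected $G$; that is the paper's Theorem~\ref{maintheorem}, so restricting to complete matrices is an avoidable loss of generality in your argument.
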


 The rest of the paper is organized as follows.
 The proof of Theorem
\ref{LundySirajGreco2017theorem} is based on that
 an explicit formula (row geometric mean of matrix elements) exists for the
 complete   LLS problem
\cite{CrawfordWilliams1985,deJong1984}. As the incomplete LLS problem
does not have such a closed form solution, only an implicit one according to equations (\ref{equationLaplacian}),
a new and essentially dif{\kern0pt}ferent approach is needed to extend the theorem to the case of missing elements.
This theorem, the main result of the paper, stating that
the geometric mean of weight vectors calculated from all spanning trees
is logarithmic least squares optimal in both cases of incomplete and complete  multiplicative  pairwise comparison matrices,
is given in Section 2.   Equivalently, the arithmetic mean of weight vectors calculated from all spanning trees
is least squares optimal for additive pairwise comparison matrices.
Section 3 shows that spanning trees appear in a natural way in electric circuits,
and the calculation of potentials with Kirchhof{\kern0pt}f's Rules is directly related to the
 least squares problem  written for additive matrices.
Section 4 concludes with computational complexity and open questions.

\section{Main result: the  arithmetic (geometric)  mean of weight vectors calculated from all spanning trees is (logarithmic)  least squares optimal}

\begin{theorem} \label{maintheorem} 
 \textbf{(multiplicative)}  Let $\mathbf{A}$ be an incomplete or complete  multiplicative  pairwise comparison matrix such that its
associated graph is connected.
Then the optimal solution of the logarithmic least squares problem
 (\ref{eq:IncompleteLLSMProblem-ObjFunction})
is equal, up to a scalar multiplier, to the geometric mean of weight vectors
calculated from all spanning trees.
\end{theorem}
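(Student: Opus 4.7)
By Theorem~\ref{BozokiFulopRonyai2010theorem}, the LLS optimum $\mathbf{y}^*$ is the unique vector with $y_1^* = 0$ satisfying $\mathbf{L}\mathbf{y}^* = \mathbf{b}$, where $b_i = \sum_{k:(i,k) \in E(G)} \log a_{ik}$. The plan is to show that the arithmetic mean $\tilde{\mathbf{y}} := \frac{1}{S}\sum_T \mathbf{y}^T$ of the log-tree weight vectors (each normalized by $y^T_1 = 0$) satisfies this same system, so by uniqueness $\tilde{\mathbf{y}} = \mathbf{y}^*$. Passing to exponentials, this reads $\bigl(\prod_T \mathbf{w}^T\bigr)^{1/S} = \mathbf{w}^*$ up to a scalar, which is the claim.

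\textbf{Reformulation via the reduced incidence matrix.} First I would fix any orientation of the edges of $G$ and let $\hat{B} \in \mathbb{R}^{m \times (n-1)}$ be the resulting reduced signed incidence matrix (rows indexed by $E(G)$, columns by $V \setminus \{1\}$). Collect the known oriented log-entries into $\mathbf{c} \in \mathbb{R}^m$. Then $\hat{B}^T\hat{B}$ is the reduced Laplacian (the matrix of Theorem~\ref{BozokiFulopRonyai2010theorem} with row and column $1$ deleted), $\hat{B}^T\mathbf{c} = (b_2,\ldots,b_n)^T$, and the LLS solution restricted to vertices $2,\ldots,n$ is $\hat{\mathbf{y}}^* = (\hat{B}^T\hat{B})^{-1}\hat{B}^T\mathbf{c}$. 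For each spanning tree $T$ the submatrix $\hat{B}_T$ on rows indexed by $E(T)$ is $(n-1) \times (n-1)$ with $\det \hat{B}_T \in \{-1,+1\}$ (the algebraic content of the matrix-tree theorem), and the tree log-solution is $\hat{\mathbf{y}}^T = \hat{B}_T^{-1}\mathbf{c}_T$. Decomposing $\mathbf{c} = \mathbf{c}_0 + \mathbf{z}$ orthogonally with $\mathbf{c}_0 \in \mathrm{col}(\hat{B})$ and $\mathbf{z} \in \ker(\hat{B}^T)$, every tree reproduces $\mathbf{c}_0$ exactly, so the consistent part contributes the common value $\hat{\mathbf{y}}^*$ to both sides; the theorem thus reduces to the cycle-space identity
\[
\sum_T \hat{B}_T^{-1}\mathbf{z}_T \;=\; 0 \qquad \text{for all } \mathbf{z} \in \ker(\hat{B}^T).
\]

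\textbf{Main obstacle --- a Cauchy--Binet identity.} The display above is the heart of the argument; the hard part is turning the sum over the (potentially very many) spanning trees into something independent of the tree structure. For each coordinate $i$, let $\hat{B}_T^{(i)}$ denote $\hat{B}_T$ with its $i$-th column replaced by $\mathbf{z}_T$, and let $\hat{B}^{[i\leftarrow\mathbf{z}]}$ denote $\hat{B}$ with its $i$-th column replaced by $\mathbf{z}$. Cramer's rule together with $\det \hat{B}_T = \pm 1$ gives $(\hat{B}_T^{-1}\mathbf{z}_T)_i = \det(\hat{B}_T)\,\det(\hat{B}_T^{(i)})$. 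Since $\det\hat{B}_S = 0$ for any row-subset $S$ that is not a spanning tree, Cauchy--Binet applied to the product $\hat{B}^T\hat{B}^{[i\leftarrow\mathbf{z}]}$ collapses the spanning-tree sum to a single determinant,
\[
\sum_T \bigl(\hat{B}_T^{-1}\mathbf{z}_T\bigr)_i \;=\; \sum_T \det\hat{B}_T \cdot \det\hat{B}_T^{(i)} \;=\; \det\bigl(\hat{B}^T\hat{B}^{[i\leftarrow\mathbf{z}]}\bigr).
\]
The $i$-th column of $\hat{B}^T\hat{B}^{[i\leftarrow\mathbf{z}]}$ is $\hat{B}^T\mathbf{z} = 0$ by the choice of $\mathbf{z}$, so the determinant vanishes. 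The cycle-space identity follows; combined with the previous paragraph this gives $\tilde{\mathbf{y}} = \hat{\mathbf{y}}^*$, completing the proof.
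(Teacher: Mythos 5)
Your proposal is correct, and every step checks out: the reduction to the normal equations $\hat{B}^T\hat{B}\hat{\mathbf{y}}=\hat{B}^T\mathbf{c}$ matches Theorem~\ref{BozokiFulopRonyai2010theorem} (row $1$ of the Laplacian system is implied by the others since $\mathbf{1}^T\mathbf{L}=0$ and $\mathbf{1}^T\mathbf{b}=0$ by reciprocity); the consistent part $\mathbf{c}_0=\hat{B}\mathbf{u}$ indeed gives $\hat{B}_T^{-1}(\mathbf{c}_0)_T=\mathbf{u}=\hat{\mathbf{y}}^*$ for every tree; and the Cauchy--Binet collapse is valid because $\det\hat{B}_S=0$ for every non-tree edge subset, while $\det\hat{B}_T=\pm1$ turns Cramer's quotient into the product $\det(\hat{B}_T)\det(\hat{B}_T^{(i)})$. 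However, your route is genuinely different from the paper's. The paper stays entirely combinatorial: it completes each tree's matrix to $\mathbf{A}^s$ consistently, writes the Laplacian system for each $\mathbf{y}^s$, and proves the key averaging identity (Lemma~\ref{lemma:average}) by an edge-exchange involution --- for a fixed node $i$, each pair (spanning tree $T^s$, non-tree edge $e(i,k)$) is matched with $(T^{s'},e(i,k_1))$ where $T^{s'}=(T^s\setminus e(i,k_1))\cup e(i,k)$ and $k_1$ is the first vertex on the tree path from $i$ to $k$, so that the two error terms telescope to $b_{ik}+b_{ik_1}$ and all deviations cancel in pairs. That argument needs nothing beyond fundamental cycles, but its crucial claim that ``every edge occurs in exactly one pair'' requires verifying the exchange map is a fixed-point-free involution, which the paper treats somewhat tersely. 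Your linear-algebraic version buys a cleaner global statement --- the whole lemma becomes the single cycle-space identity $\sum_T\hat{B}_T^{-1}\mathbf{z}_T=0$, disposed of by one determinant with a zero column --- at the cost of invoking the $\det\hat{B}_T=\pm1$ fact and Cauchy--Binet; it also makes transparent that only the cycle-space component of the data distinguishes the trees from one another. As a side remark, your machinery yields an even shorter finish: applying Cauchy--Binet directly to $\hat{B}^T\hat{B}^{[i\leftarrow\mathbf{c}]}$ gives $\sum_T(\hat{B}_T^{-1}\mathbf{c}_T)_i=\det(\hat{B}^T\hat{B}^{[i\leftarrow\mathbf{c}]})=S\,\hat{y}^*_i$ by Cramer's rule for the normal equations together with the matrix-tree theorem $\det(\hat{B}^T\hat{B})=S$, bypassing the orthogonal decomposition entirely.
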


Before proving,  let us rephrase Theorem \ref{maintheorem} with the elementwise logarithm of an incomplete
or complete  multiplicative  pairwise comparison matrix, which is a(n incomplete)  additive  (skew symmetric) matrix, let us denote it by
$\mathbf{B}$. An undirected graph $G$ is associated to $\mathbf{B}$ as follows: it
has $n$ nodes and the edge between nodes $i$ and $j$ is drawn if and only if the matrix element
$b_{ij}$ is given. Let $T^1, T^2, \ldots, T^s, \ldots, T^S$ denote the spanning trees of $G$.
Let $\mathbf{y}^s \in \mathbb{R}^n, \, s=1,2,\ldots,S$, be the weight vector
calculated from spanning tree $T^s$ and scaled by $y_{1} = 0$.

\begin{theorem} \label{LSMtheorem} 
 \textbf{(additive)}
Let $\mathbf{B}$ be an incomplete or complete  additive  (skew symmetric) matrix such that its
associated graph is connected. Then the optimal solution of the least squares problem
  (4) is equal to the arithmetic mean of weight vectors calculated from all spanning trees, each one
scaled by $y^s_{1} = 0$.
\end{theorem}

\begin{proof}
Let $G$ be the connected graph associated with the (in)complete  multiplicative
pairwise comparison matrix $\mathbf{A}$ and let $E(G)$ denote the
set of edges. The edge between nodes $i$ and $j$ is denoted by
$e(i,j)$. The Laplacian matrix of graph $G$ is denoted by
$\mathbf{L}$. Let $T^1, T^2, \ldots, T^s, \ldots, T^S$ denote the
spanning trees of $G$, where $S$ denotes the number of spanning
trees. $E(T^s)$ denotes the set of edges in $T^s$. Hereafter,
upper index $s$ is also used for indexing a weight vector or a
pairwise comparison matrix, associated to spanning tree $T^s$. Let
$\mathbf{w}^s, s=1,2,\ldots,S,$ denote the weight vector
calculated from spanning tree $T^s$. Weight vector $\mathbf{w}^s$
is unique up to a scalar multiplier. For sake of simplicity we can
assume that $w_1^s = 1$, but other ways of  scaling,
 e.g., $\prod w_i = 1$ can also be chosen. Let
$\mathbf{y}^s := \log \mathbf{w}^s, \, s=1,2,\ldots,S$, where the
logarithm is taken element-wise. Let $\mathbf{w}^{LLS}$ denote the
optimal solution to the LLS problem (scaled by $w_1^{LLS} = 1$) and $\mathbf{y}^{LS} := \log
\mathbf{w}^{LLS}$.  The formal statement of Theorem
\ref{maintheorem} is that
\[
{w}^{LLS}_i = \sqrt[\mbox{\begin{normalsize}\emph{S}\end{normalsize}}]{ \prod\limits_{s=1}^{S}{w}^s_i }, \qquad i=1,2,\ldots,n,
\]
that is, by taking the logarithm, equivalent to
\[
\mathbf{y}^{LS} = \frac{1}{S} \sum\limits_{s=1}^{S}\mathbf{y}^s,
\]
(which is the statement of Theorem \ref{LSMtheorem})      that we shall prove.
By Theorem \ref{BozokiFulopRonyai2010theorem},
\begin{equation*}
\left( \mathbf{L} \mathbf{y}^{LS} \right)_i = \sum\limits_{k: e(i,k) \in E(G)} b_{ik}
\qquad \qquad \text{ for all } i=1,2,\ldots,n,
\end{equation*}
where $b_{ik} = \log a_{ik}$ for all $(i,k) \in E(G).$
 Since graph $G$ is connected, vector
$\mathbf{y}^{LS}$ is unique with the scaling $y_1^{LS} = 0$.

It is  therefore
suf{\kern0pt}f{\kern0pt}icient to show that
\begin{equation} \label{equation:goal} 
\left( \mathbf{L} \frac{1}{S} \sum\limits_{s=1}^{S}\mathbf{y}^s \right)_i = \sum\limits_{k: e(i,k) \in E(G)} b_{ik}
\qquad \qquad \text{ for all } i=1,2,\ldots,n.
\end{equation}

Observe that the Laplacian matrices of any two spanning trees are dif{\kern0pt}ferent, therefore
'intermediate' incomplete  multiplicative  pairwise comparison matrices are needed.
Consider an arbitrary spanning tree $T^s$. Then $\frac{w^s_i}{w^s_j} = a_{ij}$ for all
$e(i,j) \in E(T^s)$. Introduce the incomplete  multiplicative
  pairwise comparison matrix $\mathbf{A}^s$
 by $a^s_{ij} := a_{ij}$ for all $e(i,j) \in E(T^s)$ and
$a^s_{ij} := \frac{w^s_i}{w^s_j}$ for all $e(i,j) \in E(G) \backslash E(T^s)$.
 The incomplete  multiplicative
 pairwise comparison matrix $\mathbf{A}^s$ is consistent
 according to Def{\kern0pt}inition 1.2 for all $s=1,2,\ldots,S.$
Again, $b^s_{ij} := \log a^s_{ij} (= y^s_i - y^s_j)$.
Now the Laplacian matrices of $\mathbf{A}$ and $\mathbf{A}^s$ are the same ($\mathbf{L}$).
Since the weight vector $\mathbf{w}^s$ is generated by the matrix elements
belonging to spanning tree $T^s$, it is also the optimal solution of the
 LLS problem regarding
$\mathbf{A}^s$  (furthermore, the optimum value is
zero, because $a^s_{ij} = \frac{w^s_i}{w^s_j}$ for all $e(i,j) \in
E(G)$).  Equivalently, the following system of linear
equations holds.
\begin{equation}
\left( \mathbf{L} \mathbf{y}^s \right)_i=
\sum\limits_{k: e(i,k) \in E(T^s)} b_{ik} + \sum\limits_{k: e(i,k) \in E(G) \backslash E(T^s)} b^s_{ik}
\qquad \qquad
\text{ for all } i=1,2,\ldots,n. \label{equation:Ly^s} 
\end{equation}

\begin{lemma} \label{lemma:average} 
\begin{equation}
\sum\limits_{s=1}^{S}
\left(\sum\limits_{k: e(i,k) \in E(T^s)} b_{ik} + \sum\limits_{k: e(i,k) \in E(G) \backslash E(T^s)} b^s_{ik} \right) =
S \sum\limits_{k: e(i,k) \in E(G)} b_{ik}. \label{equationlemma:average} 
\end{equation}
\end{lemma}
\begin{proof} 
Let $i$ be f{\kern0pt}ixed arbitrarily and consider node $i$ in all spanning trees.
There is nothing to do with edges $e(i,k) \in E(T^s)$.
Since $T^s$ is a spanning tree,
for every edge $e(i,k) \in E(G) \backslash E(T^s)$ there exists a unique path \\
$ P= \{ e(i,k_1), e(k_1,k_2), \ldots , e(k_{\ell},k) \} \subseteq E(T^s)$.
$P \cup e(i,k)$ is a cycle and
\begin{equation} \label{equation:bsik} 
b^s_{i k} = b_{i k_1} + b_{k_1 k_2} + \ldots + b_{k_{\ell} k}.
\end{equation}
Consider the following spanning tree:
$T^{s_{i,k,k_1}^{\prime}} := (T^s \backslash e(i,k_1) ) \cup e(i,k)$
 as in Figure 2.
\unitlength 1mm
\begin{center}
\begin{picture}(100,125)
\put(-26,55){\resizebox{150mm}{!}{\rotatebox{0}{
\includegraphics{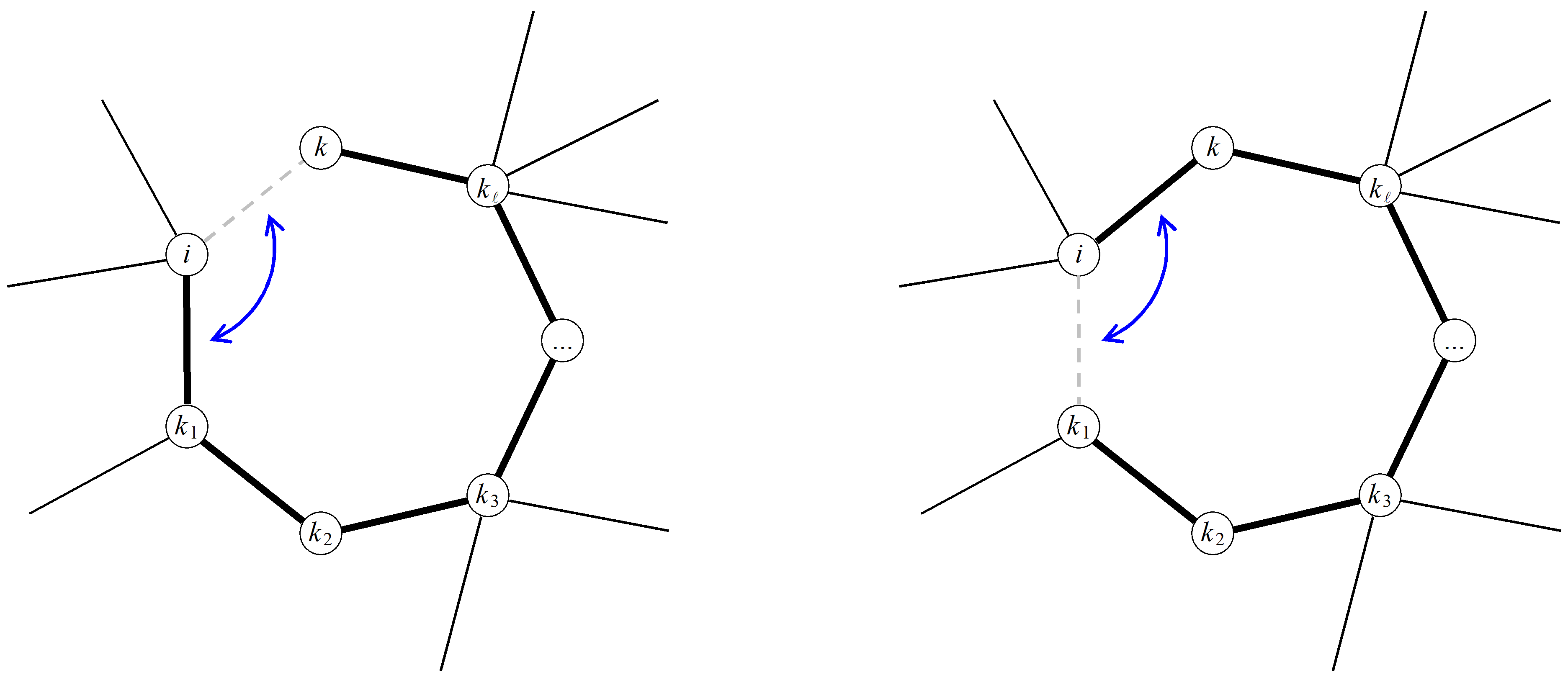}}}}  %
\put(10,85){\makebox{$T^s$}}
\put(90,85){\makebox{$T^{s_{i,k,k_1}^{\prime}}$}}
\put(-13,45){\makebox{Figure 2. The replacement of edge $e(i,k_1)$
in spanning tree $T^s$ by edge $e(i,k)$}}
\put(25,40){\makebox{results in spanning tree
$T^{s_{i,k,k_1}^{\prime}}$.}}
\end{picture}
\end{center}

\noindent
Spanning trees $T^s$ and $T^{s_{i,k,k_1}^{\prime}}$ dif{\kern0pt}fer in one edge only and
\begin{equation} \label{equation:bsprimeik1} 
b^{s_{i,k,k_1}^{\prime}}_{i k_1} = b_{i k} + b_{k k_{\ell}} + \ldots + b_{k_2 k_1}.
\end{equation}
Adding up equations (\ref{equation:bsik}) and (\ref{equation:bsprimeik1})
results in
\begin{equation} \label{equation:bsik+bsprimeik1} 
b^s_{i k} + b^{s_{i,k,k_1}^{\prime}}_{i k_1} = b_{i k} + b_{i k_1},
\end{equation}
all intermediate terms vanish due to the reciprocal property of pairwise comparison matrices.
Now let us continue this process and go through all edges $e(i,k) \in E(G) \backslash E(T^s)$
for all $k$ and $s$. The remarkable symmetry of the set of all spanning trees implies that
every edge occurs in exactly one pair.
Summing all these equations like (\ref{equation:bsik+bsprimeik1}), the statement
 of Lemma \ref{lemma:average} follows.
\end{proof}

 We can now complete the proof of Theorem
\ref{maintheorem}: add up equations in Eq.~(\ref{equation:Ly^s})
for all $s=1,2,\ldots,S$, then divide by $S$, then the left hand
side becomes the left hand side of Eq.~(\ref{equation:goal}). The
identity of the right hand sides follows from Lemma
\ref{lemma:average}, therefore Eq.~(\ref{equation:goal}) is
proved. It implies $\mathbf{y}^{LS} = \frac{1}{S}
\sum\limits_{s=1}^{S}\mathbf{y}^s,$ and, equivalently,
${w}^{LLS}_i = \sqrt[\mbox{\begin{small}\emph{S}\end{small}}]{
\prod\limits_{s=1}^{S}{w}^s_i }, \, i=1,2,\ldots,n,$ which is the
statement of Theorem \ref{maintheorem}.
\end{proof}

\textbf{Remark.} Complete pairwise comparison matrices ($S=n^{n-2}$)
are included in Theorems \ref{maintheorem} and  as a special case.
The proof of Theorem \ref{maintheorem} can also be considered as a
second and shorter proof of Theorem \ref{LundySirajGreco2017theorem}.

\begin{example} \label{lemmaexample} 
(An illustration of the proof of Theorem \ref{maintheorem})

Let incomplete  multiplicative   pairwise comparison matrix $\mathbf{A}$ be the same as in Example \ref{6x6example}.
The associated graph $G$ and its  $(S=11)$
spanning trees $T^1, T^2, \ldots, T^{11}$ are shown in Figure 3.
Consider spanning tree $T^1$ having edges $e(1,5),e(1,6),e(2,3),e(3,4),e(4,5),e(5,6)$.
Simple calculation results in its weight vector
\[
\mathbf{w}^1 =
\begin{pmatrix}
  1   \\
  a_{23} a_{34} a_{45} / a_{15} \\
  a_{34} a_{45} / a_{15}  \\
  a_{45} / a_{15}  \\
  1/a_{15} \\
  1/a_{16}
\end{pmatrix}.
\]
Ratios $\frac{w^1_{i}}{w^1_{j}} = a_{ij}$ for all $i,j$ such that $e(i,j) \in E(T^1)$.
In order to write the incomplete  multiplicative   pairwise
 comparison matrix $\mathbf{A}^1$, we need edges
 $ e(1,2), e(1,4) \in E(G) \backslash E(T^1)$ and the corresponding equations
 $ a^1_{12} := \frac{w^1_{1}}{w^1_{2}}$ and $ a^1_{14} := \frac{w^1_{1}}{w^1_{4}}$. Then
\[
\mathbf{A}^1 =
\begin{pmatrix}
     1        & a_{15}/(a_{23} a_{34} a_{45})
                         &            & a_{15}/a_{45} &    a_{15}    &     a_{16}        \\
a_{23} a_{34} a_{45} / a_{15}
              &     1        &    a_{23}  &               &              &                   \\
              &   a_{32}     &      1     &      a_{34}   &              &                   \\
a_{45}/a_{15} &              &    a_{43}  &       1       &    a_{45}    &                   \\
  a_{51}      &              &            &      a_{54}   &       1      &                   \\
  a_{61}      &              &            &               &              &         1
\end{pmatrix}.
\]

Then equations (\ref{equation:Ly^s}) for $s=1$ are as follows:
\[
\begin{pmatrix}
     4    &     -1       &      0     &     -1       &      -1      &        -1         \\
    -1    &      2       &     -1     &      0       &       0      &         0         \\
     0    &     -1       &      2     &     -1       &       0      &         0         \\
    -1    &      0       &     -1     &      3       &      -1      &         0         \\
    -1    &      0       &      0     &     -1       &       2      &         0         \\
    -1    &      0       &      0     &      0       &       0      &         1
\end{pmatrix}
\begin{pmatrix}
  0   \\
  b_{23} + b_{34} + b_{45} - b_{15} \\
  b_{34} + b_{45} - b_{15}  \\
  b_{45} - b_{15}  \\
  -b_{15} \\
  -b_{16}
\end{pmatrix} =
\begin{pmatrix}
   b_{15} + b_{16}      \\
   b_{23}               \\
  -b_{23} + b_{34}      \\
  -b_{34} + b_{45}      \\
  -b_{15} + b_{45}      \\
  -b_{16}
\end{pmatrix} +
\begin{pmatrix}
   b^1_{12} + b^1_{14}    \\
   b^1_{21}               \\
   0                      \\
   b^1_{41}               \\
   0                      \\
   0
\end{pmatrix},
\]
where
$ b^1_{12} = b_{15} - b_{23} - b_{34} - b_{45}$,
$ b^1_{21} = -b^1_{12} = - b_{15} + b_{23} + b_{34} + b_{45}$
and
$ b^1_{41} = b_{45} - b_{15} $.

We have that weight vector $\mathbf{w}^1$ is the unique solution
 to both of the LLS problems
\begin{align*}
&\min \sum \limits_{\scriptsize{
             \begin{array}{c}
              i,j:  \\
              e(i,j) \in E(T^1) \\
             \end{array}}}
\left[\log a_{ij}
-\log\left(\frac{w_{i}}{w_{j}}\right)\right]^2  \\
&\text{subject to } \qquad w_{i} > 0, \qquad i=1,2,\dotsc,6, \\
&\qquad \qquad \, \, \qquad w_{1} = 1,
\end{align*}
and
\begin{align*}
&\min \sum \limits_{\scriptsize{
             \begin{array}{c}
              i,j:  \\
              e(i,j) \in E(G) \\
             \end{array}}}
\left[\log a^1_{ij}
-\log\left(\frac{w_{i}}{w_{j}}\right)\right]^2  \\
&\text{subject to } \qquad w_{i} > 0, \qquad i=1,2,\dotsc,6, \\
&\qquad \qquad \, \, \qquad w_{1} = 1,
\end{align*}
and the optimum values are zeros in both cases.

Now let us focus on Lemma \ref{lemma:average} with
node $i=1$.  Edges adjacent to node
 1 are missing 12 times (and they are not missing 32
times) in the whole set of spanning trees, hence we can identify 6
pairs. They induce 6 pairs of equations, that are labelled in
Figure 3. In tree $T^1$,
\begin{equation} \label{11}
b_{12}^{1} = b_{15} + b_{54} + b_{43} + b_{32}.
\end{equation}
Note that equation (11), as well as the forthcoming ones, is
labelled on the corresponding edges in Figure 3.
Now $s=1, k=2, k_1=5$ and ${s_{1,2,5}^{\prime}} = 4$, because the replacement of edge $e(1,5)$ in tree $T^1$
by edge $e(1,2)$ results in tree $T^4$. Here
\begin{equation} \label{12}
b_{15}^{4} = b_{12} + b_{23} + b_{34} + b_{45}.
\end{equation}

The sum of equations (11) and (12) conf{\kern0pt}irms (\ref{equation:bsik+bsprimeik1}).

Let us continue by edge $e(1,4)$ in tree $T^1$.
\begin{eqnarray}
b_{14}^{1} = b_{15} + b_{54}, \label{13} \\
b_{15}^{2} = b_{14} + b_{45}. \label{14}
\end{eqnarray}

The remaining four pairs of edges and their equations are listed below.
\begin{eqnarray}
b_{12}^{2} = b_{14} + b_{43} + b_{32}, \label{15} \\
b_{14}^{4} = b_{12} + b_{23} + b_{34}, \label{16}
\end{eqnarray}
\begin{eqnarray}
b_{12}^{3} = b_{14} + b_{43} + b_{32}, \label{17} \\
b_{14}^{7} = b_{12} + b_{23} + b_{34}, \label{18}
\end{eqnarray}
\begin{eqnarray}
b_{14}^{5} = b_{15} + b_{54},          \label{19} \\
b_{15}^{8} = b_{14} + b_{45},          \label{20}
\end{eqnarray}
\begin{eqnarray}
b_{14}^{6} = b_{15} + b_{54},          \label{21} \\
b_{15}^{9} = b_{14} + b_{45}.          \label{22}
\end{eqnarray}

Lemma \ref{lemma:average} is now conf{\kern0pt}irmed for $i=1$:
\begin{equation*}
\sum\limits_{s=1}^{11}
\left(\sum\limits_{k: e(1,k) \in E(T^s)} b_{1k} + \sum\limits_{k: e(1,k) \in E(G) \backslash E(T^s)} b^s_{1k} \right) =
11 \sum\limits_{k: e(1,k) \in E(G)} b_{1k} = 11 (b_{12}+b_{14}+b_{15}+b_{16}).
\end{equation*}

Let us move to node $2$. Three pairs of equations can be obtained:
\begin{eqnarray}
b_{21}^{1} = b_{23} + b_{34} + b_{45} + b_{51},      \label{23} \\
b_{23}^{5} = b_{21} + b_{15} + b_{54} + b_{43},      \label{24}
\end{eqnarray}
\begin{eqnarray}
b_{21}^{2} = b_{23} + b_{34} + b_{41},               \label{25} \\
b_{23}^{8} = b_{21} + b_{14} + b_{43},               \label{26}
\end{eqnarray}
\begin{eqnarray}
b_{21}^{3} = b_{23} + b_{34} + b_{41},               \label{27} \\
b_{23}^{10}= b_{21} + b_{14} + b_{43}.               \label{28}
\end{eqnarray}

Lemma \ref{lemma:average} is now conf{\kern0pt}irmed for $i=2$:
\begin{equation*}
\sum\limits_{s=1}^{11}
\left(\sum\limits_{k: e(2,k) \in E(T^s)} b_{2k} + \sum\limits_{k: e(2,k) \in E(G) \backslash E(T^s)} b^s_{2k} \right) =
11 \sum\limits_{k: e(2,k) \in E(G)} b_{2k} = 11 (b_{21}+b_{23} ).
\end{equation*}

Cases related to the remaining nodes can be treated likewise.

\unitlength 1mm
\begin{center}
\begin{picture}(100,190)
\put(-26,10){\resizebox{150mm}{!}{\rotatebox{0}{
\includegraphics{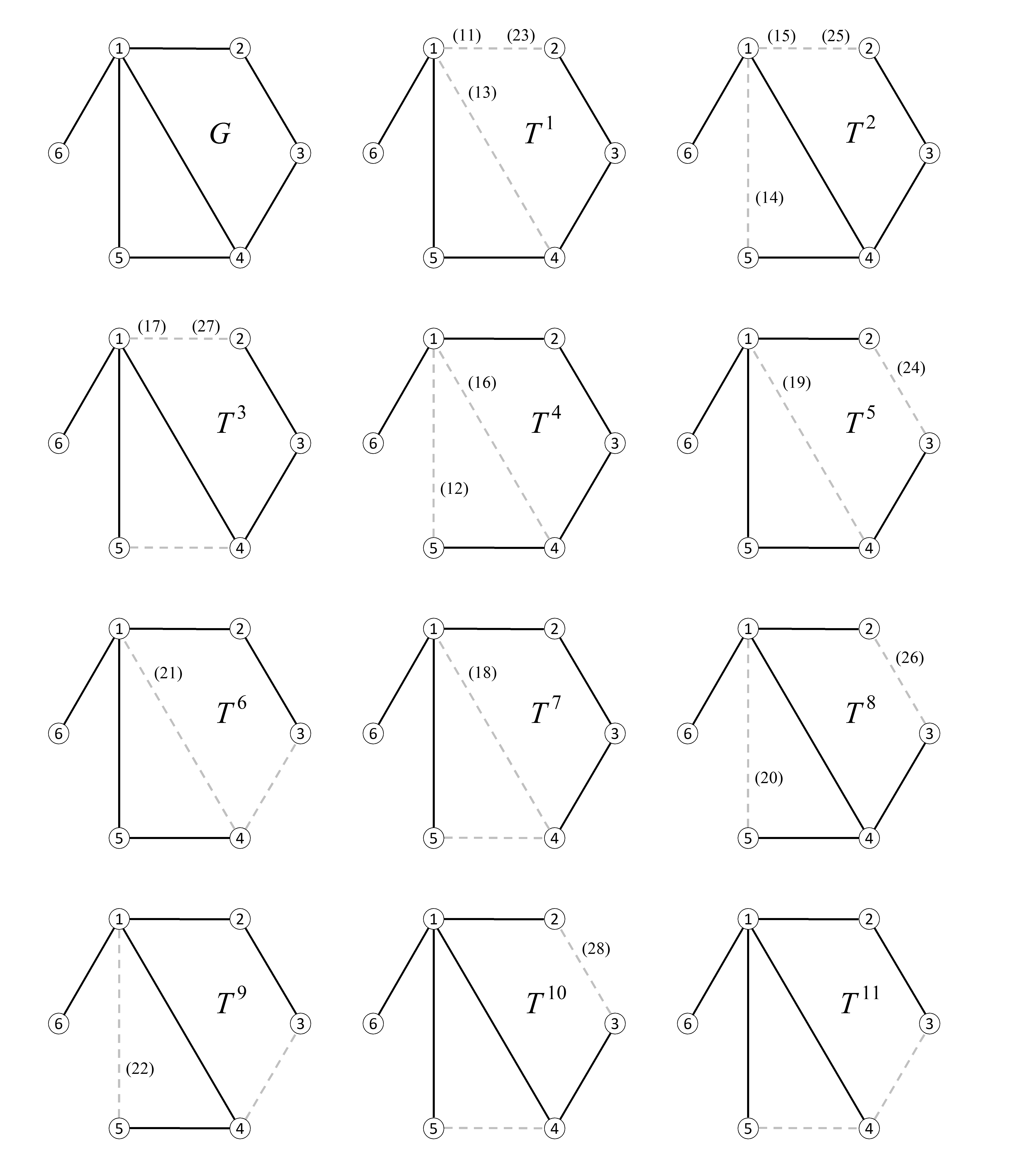}}}}
\put(-8,0){\makebox{\emph{Figure 3. Graph $G$ of Example
\ref{lemmaexample} and its spanning trees $T^1, T^2, \ldots,
T^{11}$} }}
\end{picture}
\end{center}

\end{example}

\newpage
\section{Electric circuits and potentials}

The least squares problem for additive matrices (4) 
occurs in a natural way not only in decision theory, but in physics as well.
Energy minimization and potentials in electric circuits are discussed in this section, namely,
 the least squares problem (4) and Theorem \ref{LSMtheorem} are  illustrated by an example.

\begin{example} \label{circuitexample} 
Consider the following electric circuit on four nodes.
\unitlength 1mm
\begin{center}
\begin{picture}(100,40)
\put(30,3){\resizebox{50mm}{!}{\rotatebox{0}{
\includegraphics{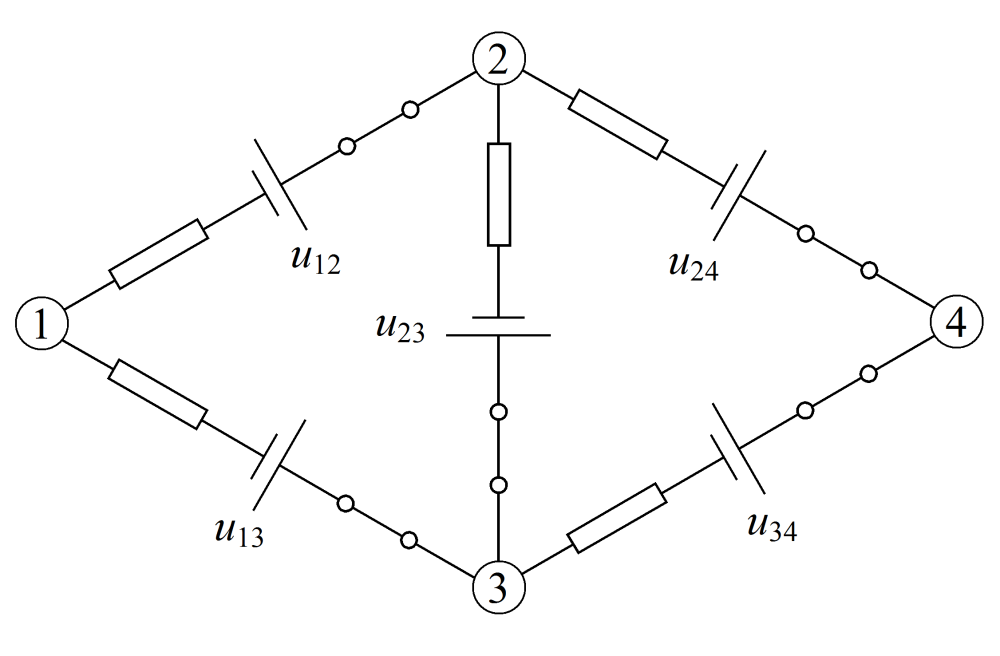}}}}
\put(5,0){\makebox{\emph{Figure 4. The electric circuit on four nodes in Example \ref{circuitexample}}}}
\end{picture}
\end{center}

Every resistor has the same resistance $R$.
The values of $u_{12},u_{13},u_{23},u_{24},u_{34}$ are arbitrary real numbers.
The aim is to calculate the potentials $U_1,U_2,U_3,U_4$ of nodes 1,2,3,4 such that the total energy (power) of the system is minimal.
  The objective function follows from a physical law by nature.
 The total energy is the sum of electrical powers ($V \cdot I = \frac{V^2}{R}$) of the resistors,
 where $V$ denotes the potential dif{\kern0pt}ference (voltage drop) across the given resistor and
 $I$ denotes the current through it. For a resistor between nodes $i$ and $j$,
 $V = u_{ij} - U_{i} + U_{j}$. Since resistance $R$ is assumed to be constant,
 the objective function to be minimized is the sum (for all edges $(i,j)$ in the graph) of terms
 $\left(u_{ij} - U_{i} + U_{j} \right)^2$.
 We have the optimization problem (4) 
with the incomplete  additive   (skew symmetric) matrix
\[
\mathbf{B} =
\begin{pmatrix}
    0       &    u_{12}      &     u_{13}     &                 \\
 - u_{12}   &        0       &     u_{23}     &      u_{24}     \\
 - u_{13}   &  - u_{23}      &         0      &      u_{34}     \\
            &  - u_{24}      &   - u_{34}     &         0
\end{pmatrix}
\]
and   variables   $\mathbf{y} = (U_1=0,U_2,U_3,U_4)^{\top}.$
 It is worth noting that if (and only if) matrix $\mathbf{B}$ is consistent according to
Def{\kern0pt}inition 1.2, then currents are zeros and $U^{\ast}_i - U^{\ast}_j = u_{ij}$ for all edges $(i,j)$,
 the total power   of the circuit is zero.

Assume two loop currents $I_a$ and $I_b$ around loops 1231 and 2432 and write
Kirchhof{\kern0pt}f's Voltage Law
  (the directed sum of the potential dif{\kern0pt}ferences around any closed loop is zero,
 (compare to Def{\kern0pt}inition 1.2)):
\begin{align}
RI_a + u_{12} + R(I_a-I_b) + u_{23} - u_{13} + RI_a = 0 \nonumber \\
RI_b + u_{24} - u_{34} + RI_b - u_{23} + R(I_b-I_a) = 0 \nonumber
\end{align}
that results in
\begin{align}
I_a = \frac{-3u_{12}+3u_{13}-2u_{23}-u_{24}+u_{34}}{8R} \nonumber \\
I_b = \frac{-u_{12}+u_{13}+2u_{23}-3u_{24}+3u_{34}}{8R}. \nonumber
\end{align}
Assume without loss of generality that $U_1 = 0.$
Then
\begin{align}
U_2 &= U_1 + RI_a + u_{12} = \frac{5}{8}u_{12}+\frac{3}{8}u_{13}-\frac{1}{4}u_{23}-\frac{1}{8}u_{24}+\frac{1}{8}u_{34} \nonumber \\
U_3 &= U_1 -RI_a + u_{13} =  \frac{3}{8}u_{12}+\frac{5}{8}u_{13}+\frac{1}{4}u_{23}+\frac{1}{8}u_{24}-\frac{1}{8}u_{34}
 \label{eq:KirchhoffsVoltageCircuitLaws} \\ 
U_4 &= U_2 + RI_b + u_{24} = \frac{1}{2}u_{12}+\frac{1}{2}u_{13}+\frac{1}{2}u_{24}+\frac{1}{2}u_{34} \nonumber
\end{align}

  Kirchhof{\kern0pt}f's Current Law (the signed sum of currents is zero for every node) can
      be also verif{\kern0pt}ied. \\

Now let us consider the spanning tree approach.
Graph $G$ has 8 spanning trees shown in Figure 5, the corresponding circuits are given in Figure 6.

\unitlength 1mm
\begin{center}
\begin{picture}(100,70)
\put(0,10){\resizebox{100mm}{!}{\rotatebox{0}{
\includegraphics{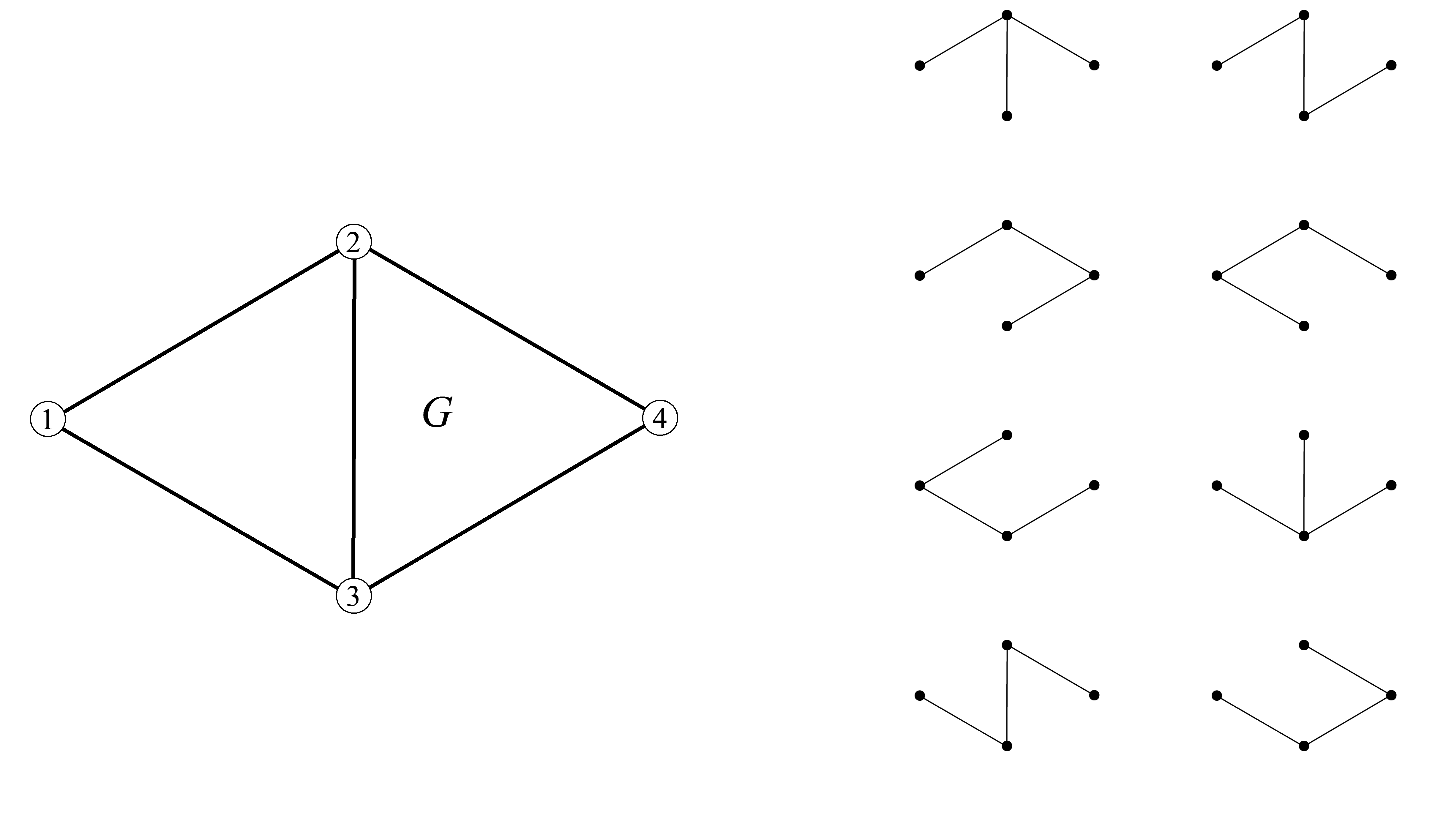}}}}
\put(10,5){\makebox{\emph{Figure 5. Graph $G$ of Example
\ref{circuitexample} and its 8 spanning trees}}}
\end{picture}
\end{center}

\newpage
\unitlength 1mm
\begin{center}
\begin{picture}(100,200)
\put(-25,00){\resizebox{150mm}{!}{\rotatebox{0}{
\includegraphics{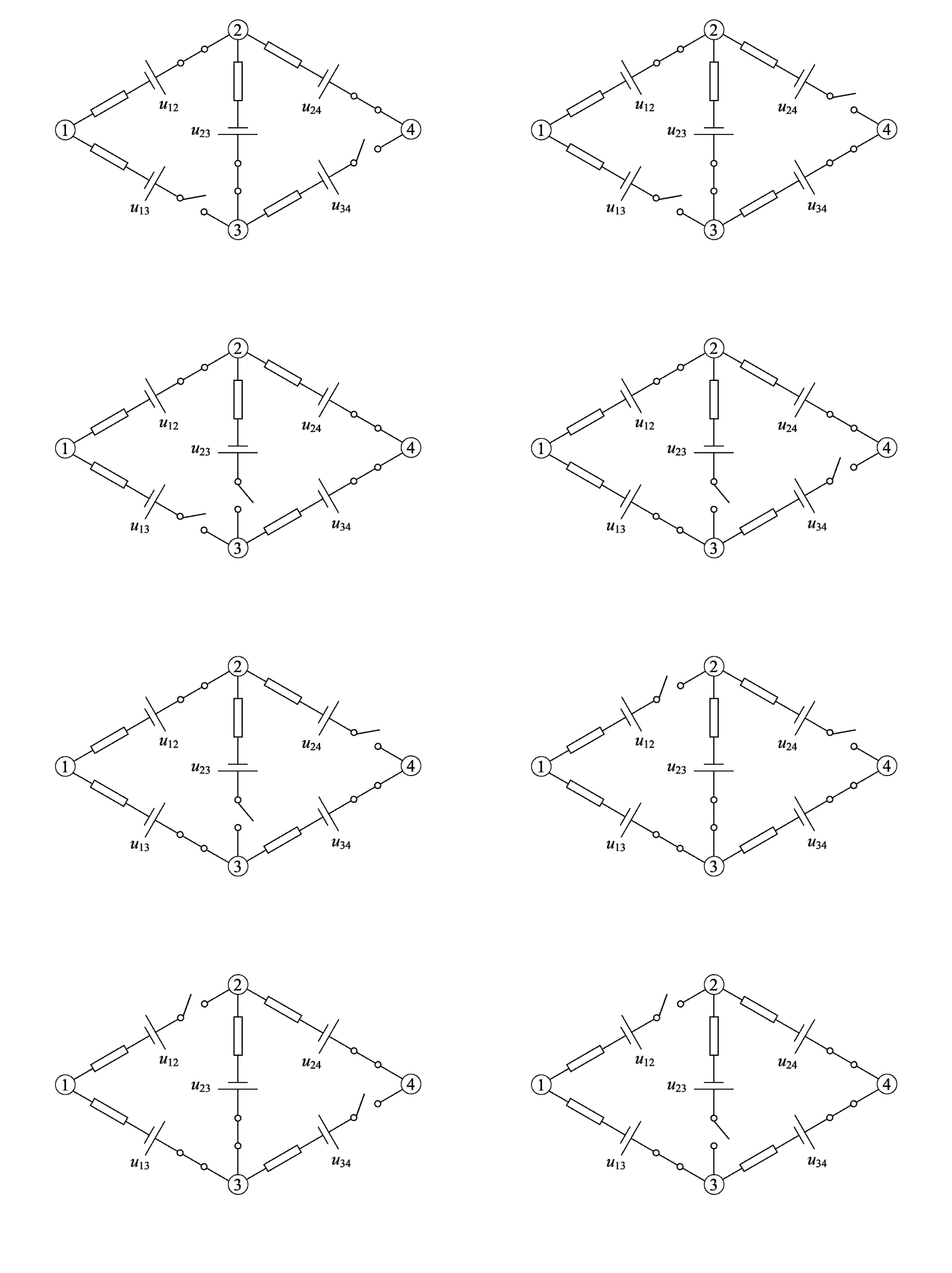}}}}
\put(-10,0){\makebox{\emph{Figure 6. Circuits corresponding to the
8 spanning trees of Example \ref{circuitexample}}}}
\end{picture}
\end{center}

We shall apply Theorem \ref{LSMtheorem}, without loss of generality  we assume again that $U_1 = 0$.
The calculation of the potentials is elementary for every spanning tree, because the (signed) voltages along the unique
path from node 1 to another node are summed: \\

\begin{tabular}{|c||c|c|c|c|}
\hline
spanning tree    &    $U_1$   &            $U_2$           &          $U_3$           &         $U_4$             \\
\hline
\hline
\begin{picture}(10,7)
\put(-2,-2){\resizebox{13mm}{!}{\rotatebox{0}{\includegraphics{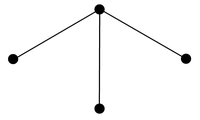}}}}
\end{picture}
                 &     0    &            $u_{12}$        &   $u_{12}+u_{23}$         &     $u_{12} + u_{24}$     \\[2mm]
\hline
\begin{picture}(10,7)
\put(-2,-2){\resizebox{13mm}{!}{\rotatebox{0}{\includegraphics{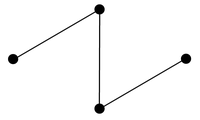}}}}
\end{picture}
                &     0    &            $u_{12}$        &   $u_{12}+u_{23}$         & $u_{12}+u_{23}+u_{34}$     \\[2mm]
\hline
\begin{picture}(10,7)
\put(-2,-2){\resizebox{13mm}{!}{\rotatebox{0}{\includegraphics{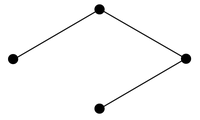}}}}
\end{picture}
               &     0    &            $u_{12}$        & $u_{12}+u_{24}-u_{34}$     &     $u_{12} + u_{24}$      \\[2mm]
\hline
\begin{picture}(10,7)
\put(-2,-2){\resizebox{13mm}{!}{\rotatebox{0}{\includegraphics{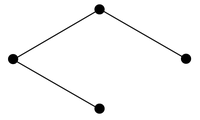}}}}
\end{picture}
                &     0    &            $u_{12}$        &           $u_{13}$       &     $u_{12} + u_{24}$      \\[2mm]
\hline
\begin{picture}(10,7)
\put(-2,-2){\resizebox{13mm}{!}{\rotatebox{0}{\includegraphics{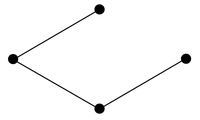}}}}
\end{picture}
                &     0    &            $u_{12}$        &           $u_{13}$       &     $u_{13} + u_{34}$      \\[2mm]
\hline
\begin{picture}(10,7)
\put(-2,-2){\resizebox{13mm}{!}{\rotatebox{0}{\includegraphics{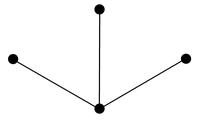}}}}
\end{picture}
                &     0    &     $u_{13} - u_{23}$      &           $u_{13}$       &     $u_{13} + u_{34}$      \\[2mm]
\hline
\begin{picture}(10,7)
\put(-2,-2){\resizebox{13mm}{!}{\rotatebox{0}{\includegraphics{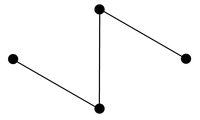}}}}
\end{picture}
                &     0    &     $u_{13} - u_{23}$      &           $u_{13}$       & $u_{13}-u_{23}+ u_{24}$    \\[2mm]
\hline
\begin{picture}(10,7)
\put(-2,-2){\resizebox{13mm}{!}{\rotatebox{0}{\includegraphics{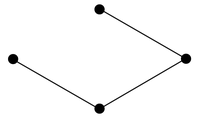}}}}
\end{picture}
                &     0    & $u_{13}+u_{34}-u_{24}$     &           $u_{13}$       &     $u_{13} + u_{34}$      \\[2mm]
\hline
\hline
                &          &                            &                           &                            \\
arithmetic mean &     0    &
  $\frac{5}{8}u_{12}+\frac{3}{8}u_{13}-\frac{1}{4}u_{23}$ &
                             $\frac{3}{8}u_{12}+\frac{5}{8}u_{13}+\frac{1}{4}u_{23}$ &
                                                                           $\frac{1}{2}u_{12}+\frac{1}{2}u_{13}$  \\
                 &          &
                   $-\frac{1}{8}u_{24}+\frac{1}{8}u_{34}$ &
                                              $+\frac{1}{8}u_{24}-\frac{1}{8}u_{34}$ &
                                                                           $ +\frac{1}{2}u_{24}+\frac{1}{2}u_{34}$  \\
                 &          &                            &                          &                              \\
\hline
\end{tabular}

\begin{center}
\emph{Table 1. Potentials calculated from the 8 spanning trees of Example \ref{circuitexample} } \\[3mm]
\end{center}

The arithmetic means in Table 1 are the same as the ones derived from Kirchhof{\kern0pt}f's laws
given in (\ref{eq:KirchhoffsVoltageCircuitLaws}).

According to Theorem \ref{LSMtheorem} the arithmetic means in Table 1 satisfy the following system (in an
analogous way to (\ref{equationLaplacian})-(\ref{normalizationy1=0})):
\[
\begin{pmatrix}
    2       &      - 1       &      - 1       &        0          \\
   - 1      &        3       &      - 1       &      - 1        \\
   - 1      &      - 1       &        3       &      - 1        \\
    0       &      - 1       &      - 1       &        2
\end{pmatrix}
\begin{pmatrix}
 0 \\
 \frac{5}{8}u_{12}+\frac{3}{8}u_{13}-\frac{1}{4}u_{23}-\frac{1}{8}u_{24}+\frac{1}{8}u_{34}   \\
 \frac{3}{8}u_{12}+\frac{5}{8}u_{13}+\frac{1}{4}u_{23}+\frac{1}{8}u_{24}-\frac{1}{8}u_{34}   \\
 \frac{1}{2}u_{12}+\frac{1}{2}u_{13}+\frac{1}{2}u_{24}+\frac{1}{2}u_{34}
\end{pmatrix}
=
\begin{pmatrix}
      u_{12} + u_{13}           \\
 - u_{12} + u_{23} + u_{24}     \\
 - u_{13} - u_{23} + u_{34}     \\
 - u_{24}          - u_{34}
\end{pmatrix},
\]
where the matrix above is the Laplacian of $G$, and the right hand side is the vector
of row elements' sum in $\mathbf{B}$.
\end{example}

\section{Conclusions}
It was shown in this paper that two weighting methods,
based on rather dif{\kern0pt}ferent principles and approaches,
 are equivalent not only for complete pairwise comparison matrices,
as it was recently proved by Lundy, Siraj and Greco
\cite{LundySirajGreco2017}, but also for incomplete ones.
The  arithmetic (geometric)
 mean of weight vectors calculated from all spanning
trees was proved to be (logarithmic)  least squares optimal.
 The proof of the complete case
\cite{LundySirajGreco2017} cannot be extended to the incomplete
case, due to that the incomplete (L)LS optimal solution does not
have an explicit formula. However, the implicit formula
(\ref{equationLaplacian}) was still applicable to operations with
spanning trees.

The advantages rooted in the def{\kern0pt}inition of the two methods,
namely the clear interpretation of taking all spanning trees into account and
the optimality by a widely analyzed objective functions (LLS, LS), are now united.
 Spanning trees not only unfold the graph of comparisons,
 but their corresponding weight vectors also provide an expressive decomposition of the
 (logarithmic) least squares optimal weight vector.
 An important consequence of the paper is that future analyses of weighting methods should not
distinguish between the incomplete LLS/LS       and the
geometric/arithmetic        mean of weight vectors from all spanning trees.

There is a signif{\kern0pt}icant dif{\kern0pt}ference in
computational complexity. The (logarithmic)   least squares problem
can be solved from a single system of linear equations (the
coef{\kern0pt}f{\kern0pt}icient matrix is the Laplacian),
requiring at most  $O(n^{2.376})$ steps in theory
\cite{Spielman2010}. However, recent approximate and iterative
algorithms optimized for large and
suf{\kern0pt}f{\kern0pt}iciently sparse matrices run in nearly
linear time \cite{Spielman2010,Vishnoi2013}. The enumeration of
all spanning trees with the algorithm of Gabow and Myers
\cite{GabowMyers1978}, requires $O(n+m+nS)$ steps, where $m$
denotes the number of edges in $G$. The computational complexity
of calculating all weight vectors, associated to the spanning
trees, is $\max\{O(nS), O(n+m+nS) \}$ steps, where $S$, the number
of spanning trees, is between 1 and $n^{n-2}$. We can conclude
that, except for special matrices whose associated graph has a
small number of spanning trees, the (logarithmic) least squares
problem is faster to solve.

Certain applications apply the spanning trees enumeration, but not necessarily
together with the aggregation by the geometric mean.
The approach of spanning trees enumeration is used in determining the consistency
to build the distribution of expert estimates based on the matrix \cite{OlenkoTsyganok2016}.
Such problems of{\kern0pt}fer further research possibilities.

The possible equivalence of  some
mean of weight vectors, calculated from all spanning trees and other weighting methods, is still an open problem.

Taking weights into consideration in (logarithmic) least squares problem
(see, e.g., \cite{Barzilai1997} and \cite[Chapter 6]{Lootsma1999})
 is a possible extension. In group decision making, weights represent the voting powers of the individual decision makers.
 Multiple comparisons for the same pairs, or considering information quality and source credibility
 also lead to weighted models with objective functions
$ \sum v_{ij} \left[\log a_{ij}-\log\left(\frac{w_{i}}{w_{j}}\right)\right]^2 $
or
$ \sum v_{ij} \left(b_{ij} - y_{i} + y_{j} \right)^2. $
 An extension of Theorems \ref{maintheorem} and \ref{LSMtheorem} to the weighted case is more than inspiring.
Note that the weighted variant of the corresponding representation with electric
circuits and potentials in Section 3 leads to non-identical resistances.

\section*{Acknowledgements}
The constructive remarks of the anonymous reviewers are greatly acknowledged.
The authors would like to
show their gratitude to Satoru Fujishige (Research Institute for
Mathematical Sciences, Kyoto University) for his remark, on the
analogy with electric circuits, that he made at the 10th
Japanese-Hungarian Symposium on Discrete Mathematics and Its
Applications, May 22-25, 2017, Budapest, Hungary.
 The authors are grateful to Andr\'as Recski
 (Budapest University of Technology and Economics)
for his substantial comments.
J\'anos F\"ul\"op (Institute for Computer Science and Control, Hungarian
Academy of Sciences (MTA SZTAKI) and \'Obuda University, Budapest)
is greatly acknowledged for his valuable comments on the
computational complexity of solving the Laplacian equation.
 Orsolya Csisz\'ar is greatly acknowledged for her careful proofreading.
S.~Boz\'oki acknowledges the support of the J\'anos Bolyai Research Fellowship
 of the Hungarian Academy of Sciences (no.~BO/00154/16/3);
 the \'{U}NKP-18-4-BCE-90 Bolyai+ New National Excellence Program of the Ministry of Human Capacities, Hungary;
and the Hungarian Scientif{\kern0pt}ic Research Fund (OTKA), grant no.~K111797.

\end{document}